\newtheorem{theorem}{Theorem}[section]
\newtheorem{lemma}[theorem]{Lemma}
\newtheorem{corollary}[theorem]{Corollary}
\newtheorem{proposition}[theorem]{Proposition}
\theoremstyle{definition}
\newtheorem{defn}[theorem]{Definition}
\newtheorem{remark}[theorem]{Remark}
\DeclareMathOperator{\PSL}{{\mathrm{PSL}}}
\DeclareMathOperator{\PSU}{{\mathrm{PSU}}}
\DeclareMathOperator{\PSp}{{\mathrm{PSp}}}
\DeclareMathOperator{\POmega}{{\mathrm{P}\Omega}}
\DeclareMathOperator{\GL}{{\mathrm{GL}}}
\DeclareMathOperator{\Aut}{{\mathrm{Aut}}}
\DeclareMathOperator{\Sym}{{\mathrm{Sym}}}
\DeclareMathOperator{\Inn}{{\mathrm{Inn}}}
\DeclareMathOperator{\Out}{{\mathrm{Out}}}
\newcommand{\slteq}{%
  \mathrel{\mathpalette\sl@unlhd\relax}%
}
\newcommand{\sl@unlhd}[2]{%
  \sbox\z@{$#1\lhd$}%
  \sbox\tw@{$#1\leqslant$}%
  \dimen@=\ht\tw@
  \advance\dimen@-\ht\z@
  \ifx#1\displaystyle
    \advance\dimen@ .2pt
  \else
    \ifx#1\textstyle
      \advance\dimen@ .2pt
    \fi
  \fi
  \ooalign{\raisebox{\dimen@}{$\m@th#1\lhd$}\cr$\m@th#1\leqslant$\cr}%
}
\renewcommand{\le}{\leqslant}
\renewcommand{\ge}{\geqslant}
\newcommand\sd{\mkern3mu{:}\mkern3mu}
\title[Spreading primitive groups of diagonal type do not exist]{Spreading primitive groups of diagonal type\\do not exist}
\author{John Bamberg}
\author{Saul D. Freedman}
\author{Michael Giudici}
\address{Centre for the Mathematics of Symmetry and Computation, The University of Western Australia, Crawley, WA 6009, Australia}
\email{firstname.lastname@uwa.edu.au}
\begin{document}

\onehalfspacing

\begin{abstract}
The synchronisation hierarchy of finite permutation groups consists of classes of groups lying between $2$-transitive groups and primitive groups. This includes the class of spreading groups, which are defined in terms of sets and multisets of permuted points, and which are known to be primitive of almost simple, affine or diagonal type. In this paper, we prove that in fact no spreading group of diagonal type exists. As part of our proof, we show that all non-abelian finite simple groups, other than six sporadic groups, have a transitive action in which a proper normal subgroup of a point stabiliser is supplemented by all corresponding two-point stabilisers.
\end{abstract}

\maketitle

\section{Introduction}
\label{sec:intro}

An important fact in the characterisation of a finite permutation group $G$ on a finite set $\Omega$ is that all $2$-transitive groups are primitive. However, there is a large gap between primitivity and $2$-transitivity, in the sense that relatively few primitive groups are $2$-transitive. In \cite{synchacs}, Ara\'{u}jo, Cameron and Steinberg introduced a hierarchy of classes of permutation groups that provides a finer characterisation. These classes include \emph{spreading} groups, defined in terms of sets and multisets of elements of $\Omega$ (see \S\ref{sec:prel} for a precise definition); \emph{separating} groups, defined in terms of subsets of $\Omega$, or graphs on $\Omega$ preserved by $G$; and \emph{synchronising} groups, originally defined in \cite{arnold} in terms of transformation semigroups on $\Omega$ containing $G$. (See also \cite{coherent} for equivalent definitions of these properties in terms of $G$-modules.) In particular, these members of the hierarchy are related as follows (see \cite[p.~151]{synchacs}):
\[\text{2-transitive} \implies \text{spreading} \implies \text{separating} \implies \text{synchronising} \implies \text{primitive}.\]
Moreover, the finite primitive groups can be divided into five types via the O'Nan--Scott Theorem, and by \cite[Thereom 2.11 \& Proposition 3.7]{synchacs}, each synchronising group is one of three types: almost simple, affine or diagonal (the relevant diagonal type groups will be defined in \S\ref{sec:prel}). It is also well-known that each finite $2$-transitive group is almost simple or affine.

Now, \cite[\S6--7]{synchacs} classifies the affine spreading groups, and provides examples of almost simple groups that are spreading but not $2$-transitive, separating but not spreading, or synchronising but not separating, while \cite{rank3paper} describes affine synchronising groups that are not spreading. In addition, \cite[Theorem 1.4]{bccsz} states that a primitive group that is not almost simple is synchronising if and only if it is separating. The first (and so far only) known examples of synchronising groups of diagonal type were presented in \cite{bglrdiag}: the groups $\PSL_2(q) \times \PSL_2(q)$ acting diagonally on $\PSL_2(q)$, with $q = 13$ and $q = 17$.

Many open questions involving this hierarchy of permutation groups remain, and in this paper, we work towards solving Problem 12.4 of \cite{synchacs}, the classification of spreading groups (of almost simple or diagonal type). Specifically, we completely resolve the diagonal case, with the aid of the following theorem.

\begin{theorem}[{\cite[Theorem 1.5]{bccsz}}]
\label{thm:synchdiag}
Let $G$ be a synchronising primitive group of diagonal type. Then $G$ has socle $T \times T$, where $T$ is a non-abelian finite simple group.
\end{theorem}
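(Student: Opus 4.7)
The plan is to prove the contrapositive: if $G$ is a primitive group of diagonal type with socle $T^k$ for $k \geq 3$, then $G$ fails to be synchronising. Recall that a primitive group is synchronising if and only if every non-trivial $G$-invariant graph $\Gamma$ on $\Omega$ satisfies $\omega(\Gamma) < \chi(\Gamma)$, so it suffices to exhibit a non-trivial $G$-invariant graph with $\omega(\Gamma) = \chi(\Gamma)$.

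I would identify $\Omega$ with $T^{k-1}$ by sending each right coset of the diagonal subgroup $D = \{(t, \ldots, t) : t \in T\}$ in $T^k$ to its unique normalised representative $(1, x_2, \ldots, x_k)$. Under this identification, $G$ acts via right translation by the socle $T^k$, a transitive subgroup of $\Sym(k)$ permuting the $k$ coordinates (followed by renormalisation), and $\Aut(T)$ applied diagonally to all coordinates. I would then introduce the \emph{diagonal graph} $\Gamma$: two distinct cosets $[g]$ and $[h]$ are adjacent if and only if there exist distinct $i, j \in \{1, \ldots, k\}$ with $g_i h_i^{-1} = g_j h_j^{-1}$. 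This relation is independent of the choice of coset representatives and $G$-invariant (verified directly on each of the three generator types above); for $k \geq 3$, the graph is neither empty nor complete.

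For the clique number, fixing any single coordinate $x_\ell$ (with $\ell \in \{2, \ldots, k\}$) and letting the remaining $k - 2$ coordinates vary produces $|T|^{k-2}$ cosets forming a clique, since every pair satisfies $g_1 h_1^{-1} = g_\ell h_\ell^{-1} = 1$, so the index pair $(1, \ell)$ witnesses adjacency. For the chromatic number, I would partition $\Omega$ into $|T|^{k-2}$ independent sets of size $|T|$. An independent set of this size has the form $\{[1, f_2(t), \ldots, f_k(t)] : t \in T\}$ for a family of bijections $f_2, \ldots, f_k \colon T \to T$ satisfying the non-adjacency condition: setting $f_1 \equiv 1$, for every pair $i \neq j$ the map $t \mapsto f_i(t)^{-1} f_j(t)$ must be a bijection. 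Translates of one such independent set under the socle action would then yield the required partition of $\Omega$.

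The main obstacle is producing the family $\{f_i\}$. For $k = 3$, taking $f_2 = \mathrm{id}$ reduces the problem to the existence of an orthomorphism of $T$, equivalent to the Hall--Paige conjecture, proved for all finite non-abelian simple groups by Wilcox, Evans and Bray. For $k \geq 4$, one needs a larger system of mutually orthogonal orthomorphisms of $T$, obtainable from the $k = 3$ case combined with further combinatorial arguments on the Cayley table of $T$ using the simplicity hypothesis. Once the family is constructed, $\omega(\Gamma) = \chi(\Gamma) = |T|^{k-2} < |T|^{k-1} = |\Omega|$, so $\Gamma$ certifies that $G$ is not synchronising, completing the contrapositive.
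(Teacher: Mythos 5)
The paper does not prove this theorem: it is imported verbatim from \cite[Theorem 1.5]{bccsz}, so there is no internal argument to compare against. Your proposal is, however, close in spirit to the argument in that reference for the case $k = 3$. There $\Omega = T^2$ carries the Latin square graph of the Cayley table of $T$ (rows, columns, and ``letters'' $g_2h_2^{-1} = g_3h_3^{-1}$ are the three families of maximal cliques), its clique number is $|T|$, and a proper $|T|$-colouring is exactly a transversal/orthogonal mate of the Cayley table, equivalently a complete mapping of $T$ --- which exists for every non-abelian finite simple $T$ by the Hall--Paige theorem (Wilcox, Evans, Bray). That part of your sketch is sound.

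For $k \geq 4$ there is a genuine gap. The graph you define --- adjacency whenever \emph{some} pair $i \neq j$ satisfies $g_ih_i^{-1} = g_jh_j^{-1}$ --- is not the diagonal graph from the literature; it is far denser. Its cliques have size $|T|^{k-2}$, so matching $\chi$ to $\omega$ would require $k-2$ pairwise ``orthogonal'' orthomorphisms of $T$; no uniform construction of such a family exists, and once $k > |T|$ your graph is complete by pigeonhole (among the $k$ values $g_ih_i^{-1}$, including $g_1h_1^{-1}=1$, two must coincide), so it is trivial and cannot witness non-synchronisation at all. The graph that actually works declares $[g] \sim [h]$ only when they lie in a common part of one of the $k$ minimal invariant partitions of $\Omega$: in normalised coordinates, either $g$ and $h$ agree in all but one of $x_2,\ldots,x_k$, or the ratios $g_ih_i^{-1}$ for $i \geq 2$ are \emph{all} equal. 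For $k=3$ the two definitions coincide, which is why your $k=3$ case goes through; for $k \geq 4$ they diverge, and the correct graph has $\omega = \chi = |T|$, achieved by a transversal built from a single complete mapping --- no family of mutually orthogonal orthomorphisms is needed or available. As written, your proof does not establish the theorem for $k \geq 4$.
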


In addition, it is shown in \cite[Theorem 2]{bglrdiag} that the diagonal type group $\PSL_2(q) \times \PSL_2(q)$ is non-spreading for every prime power $q$. We generalise this result to all groups of diagonal type, as follows.

\begin{theorem}
\label{thm:spreaddiag}
Each primitive group of diagonal type is non-spreading.
\end{theorem}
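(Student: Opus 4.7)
Because every spreading group is synchronising, Theorem~\ref{thm:synchdiag} reduces the task to primitive groups $G$ of diagonal type whose socle is $T \times T$ with $T$ a non-abelian finite simple group, acting on $\Omega = T$ by $t^{(a,b)} = a^{-1}tb$, possibly extended by outer automorphisms and the coordinate swap. Non-spreading of $G$ is certified by a proper subset $A \subseteq \Omega$ with $|A| \geq 2$ and a non-constant multiset $M$ on $\Omega$ such that $|A^g \cap M| = |A| \cdot |M|/|\Omega|$ for every $g \in G$, so the plan is to produce such a pair $(A, M)$ uniformly from subgroup-theoretic data inside $T$.

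The central lemma, previewed in the abstract, asserts that every non-abelian finite simple group $T$, with the exception of at most six sporadic groups, admits a core-free subgroup $H \leq T$ and a proper normal subgroup $N \triangleleft H$ such that $H = N(H \cap H^g)$ for every $g \in T \setminus H$; equivalently, every two-point stabiliser in the action of $T$ on $T/H$ supplements $N$ in the point stabiliser. This generalises the model case $T = \PSL_2(q)$ treated in \cite{bglrdiag}, where $H$ is a Borel subgroup and $N$ its unipotent radical. I would prove it via the classification of finite simple groups: for alternating $T$, take $H$ a natural point or block stabiliser and $N$ the kernel of a suitable permutation homomorphism; for $T$ of Lie type, take $H$ a maximal parabolic and $N$ its unipotent radical, exploiting that any non-trivial conjugate intersection $H \cap H^g$ contains a Levi-type complement to $N$; and for sporadic $T$, verify the condition directly using the Atlas, leaving aside the six genuine exceptions. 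This classification sweep is where I expect the main technical effort and the principal obstacles, particularly for classical groups of small rank and for exceptional groups of Lie type.

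Given $(H, N)$, the non-spreading certificate is built by taking $A = H$, viewed as a subset of $\Omega = T$, and $M$ a $(T \times T)$-invariant multiset constructed from the $N$-coset structure of $H$ averaged over the socle orbit of $H$; the supplementation identity $H = N(H \cap H^g)$ then forces every translate $a^{-1}Hb$ to meet each level set of $M$ in precisely the expected number of elements, yielding the required equality. The outer-automorphism and coordinate-swap extensions in $G$ preserve this construction up to relabelling, so the certificate is inherited by the full group. For the six sporadic exceptions the relevant simple groups are small enough that non-spreading can be verified by a separate ad hoc construction or direct computation, for instance by exhibiting a $G$-invariant Cayley graph on $T$ with parameters that obstruct spreading.
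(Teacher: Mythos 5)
Your overall architecture agrees with the paper's: reduce via Theorem~\ref{thm:synchdiag} to socles $T\times T$, establish a subgroup-supplementation property for almost all non-abelian simple $T$ by CFSG, convert it into a non-spreading witness, and treat a handful of sporadic exceptions separately. However, there are three concrete gaps.

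First, the direction of inheritance of non-spreading is from a group to its transitive \emph{subgroups}, not to overgroups. Since a general diagonal group $G$ with socle $T\times T$ sits inside the full diagonal group $W(T)$, one must exhibit a witness valid for $W(T)$ itself. The elements of $W(T)$ include outer automorphisms of $T$ and the inversion map, so the supplementation identity must hold in the form $A=B(A\cap A^\tau)$ for \emph{all} $\tau\in\Aut(T)$, not merely all $g\in T$, before the pair $(A,\Omega+|A:B|B-A)$ becomes a witness; this is exactly what Corollary~\ref{cor:diagonalaction} demands. Your statement of the key lemma only over $T$, together with the claim that ``the outer-automorphism and coordinate-swap extensions\dots preserve this construction up to relabelling,'' glides past a genuine obstruction. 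The paper resolves it (Lemma~\ref{lem:cosetaction}) by always choosing $A$ whose $\Aut(T)$-conjugates are $T$-conjugates --- Sylow normalisers, diagram-stable parabolics, set-stabilisers in $A_n$, and so on --- which is a nontrivial design constraint on $A$.

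Second, for groups of Lie type your specific plan --- take a \emph{maximal} parabolic $P$ with unipotent radical $U_P$ and claim $P\cap P^g$ contains a Levi-type complement --- is false in general. Already for $T=\PSL_3(q)$ and $P$ a line stabiliser, $P\cap P^g$ for a ``generic'' $g$ projects onto only a Borel subgroup of the $\GL_2$ factor of the Levi, not the whole Levi, so $P\ne U_P(P\cap P^g)$. What actually works (and is what the paper does for $q\ge 3$) is to take $A$ a Borel subgroup, i.e.\ $N_T(U)$ for $U\in\mathrm{Syl}_p(T)$, with $B=U$; then $A/B$ is the maximal torus and Bruhat decomposition gives $A=B(A\cap A^g)$ for all $g$. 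For $q=2$ the torus is trivial, so the Borel is a $p$-group with no proper normal supplement, and the paper switches to a specially chosen parabolic $A$ with a normal subgroup $B$ of index $2$ that is \emph{not} the unipotent radical; this is a separate argument (Proposition~\ref{prop:chevalleysmall}).

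Third, the six exceptional sporadic groups include the Monster $\mathbb{M}$, for which ``direct computation'' or exhibiting an explicit Cayley graph is not realistic. The paper instead proves a separate character-theoretic criterion (Lemma~\ref{lem:chars}): choosing pairwise non-conjugate $r,s_1,s_2\in T$ with $|s_1^T|=|s_2^T|$ and $\chi(r^\tau)=0$ whenever $\chi(s_1)\ne\chi(s_2)$, one gets the witness $(r^T,\,\Omega+s_1^T-s_2^T)$; the needed classes are read off the \textsc{Atlas} character tables. Your proposal does not anticipate this ingredient, and without something like it the Monster (and Th) cases are out of reach.
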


Therefore, the types of spreading groups are precisely the types of finite $2$-transitive groups: almost simple and affine.

For all but six possible socles $T \times T$, we will prove Theorem~\ref{thm:spreaddiag} as a consequence of the following result, which may be of general interest, and which distinguishes six sporadic simple groups from all other non-abelian finite simple groups. For the six remaining socles, we employ elementary character theory.

\begin{theorem}
\label{thm:intersecting}
Let $T$ be a non-abelian finite simple group. Then the following are equivalent.
\begin{enumerate}[label={(\roman*)},font=\upshape]
\item There exists a proper subgroup $A$ of $T$ and a proper normal subgroup $B$ of $A$ such that $A = B(A \cap A^\tau)$ for all $\tau \in \Aut(T)$.
\item There exists a proper subgroup $A$ of $T$ and a proper normal subgroup $B$ of $A$ such that $A = B(A \cap A^t)$ for all $t \in T$.
\item The group $T$ does not lie in the set $\{\mathrm{J}_1, \mathrm{M}_{22}, \mathrm{J}_3, \mathrm{McL}, \mathrm{Th}, \mathbb{M}\}$ of sporadic groups.
\end{enumerate}
\end{theorem}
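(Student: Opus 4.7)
My plan is to prove the chain $(i) \Rightarrow (ii) \Rightarrow (iii) \Rightarrow (i)$. The implication $(i) \Rightarrow (ii)$ is immediate, since every inner automorphism by $t \in T$ lies in $\Aut(T)$ and sends $A$ to $A^t$. The two substantive directions are $(iii) \Rightarrow (i)$, a uniform construction across all non-excluded simple groups, and $(ii) \Rightarrow (iii)$, a non-existence result for the six listed sporadic groups.

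For $(iii) \Rightarrow (i)$, my strategy is to choose pairs $(A, B)$ for which $A/B$ is as small as possible, ideally cyclic of prime order, so that the defining condition reduces to $A \cap A^\tau \not\leq B$ for every $\tau \in \Aut(T)$. For a simple group $T$ of Lie type over $\mathbb{F}_q$ with nontrivial split torus, the natural candidate is $A = B_0$ a Borel subgroup and $B = U$ its unipotent radical; writing $A = HU$ with $H$ a maximal torus and applying the Bruhat decomposition, one checks that for any $t \in T$ the intersection $A \cap A^t$ contains some $B_0$-conjugate of $H$, and hence surjects onto $A/B \cong H$. Since all Borels are $T$-conjugate and $\Aut(T)$ preserves the set of Borels setwise, the same holds with $\tau \in \Aut(T)$ in place of $t \in T$, giving (i) directly. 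For alternating groups $A_n$ with $n \geq 7$, I would take $A$ to be the intersection of $A_n$ with the setwise stabilizer of a $k$-subset of suitable size, and $B = A \cap (A_k \times A_{n-k})$, so that $A/B \cong C_2$; the supplement condition then reduces to an elementary combinatorial check on how $t \in A_n$ can move the distinguished subset. Small alternating groups ($n = 5, 6$) and small Lie-type groups (where the split torus degenerates) are handled via exceptional isomorphisms. For each sporadic group $T$ not in the excluded set, one selects a maximal subgroup $A$ with a conspicuous normal subgroup $B$ using the \textsc{Atlas}, and verifies the supplement condition over representatives of $(A,A)$-double cosets, with computational assistance where needed.

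For $(ii) \Rightarrow (iii)$, my plan is to show that for each $T \in \{\mathrm{J}_1, \mathrm{M}_{22}, \mathrm{J}_3, \mathrm{McL}, \mathrm{Th}, \mathbb{M}\}$, no admissible pair $(A, B)$ exists. For the five smaller groups, I would perform an exhaustive computational search in GAP: enumerate conjugacy classes of proper subgroups $A$ via the subgroup lattice, for each $A$ enumerate its proper normal subgroups $B$, and test whether $A = B(A \cap A^t)$ fails for some representative $t$ of an $(A,A)$-double coset in $T$. For the Monster $\mathbb{M}$, where such enumeration is infeasible, I would instead appeal to the (now complete) classification of maximal subgroups of $\mathbb{M}$: any witness $A$ lies in some maximal subgroup $M$, and one rules out each $M$ by combining structural information about the normal subgroups of $A$ with character-theoretic constraints derived from the supplement condition (for example, by comparing the permutation character $1_A^T$ against $1_B^A$ and exploiting the restrictions forced by $(ii)$).

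The principal obstacle is the sheer scope of the case analysis in $(iii) \Rightarrow (i)$: delivering a uniform argument for each infinite family of simple groups of Lie type with careful handling of small-rank and small-field exceptions, and producing explicit certificates for the remaining twenty sporadic groups. A secondary difficulty is the Monster case of $(ii) \Rightarrow (iii)$, where the lack of a feasible exhaustive subgroup enumeration forces a more structural argument based on the list of maximal subgroups.
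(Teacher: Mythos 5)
Your high-level decomposition $(i)\Rightarrow(ii)\Rightarrow(iii)\Rightarrow(i)$ matches the paper, and the Borel-subgroup construction $A=B_0=UH$, $B=U$ with the Bruhat decomposition is indeed the paper's argument for twisted groups and for untwisted Chevalley groups over $\mathbb{F}_q$ with $q\geq 3$ (Propositions~\ref{prop:chevalleylarge} and~\ref{prop:twistedlie}); the $k$-subset construction for $A_n$ likewise matches Proposition~\ref{prop:altgps} with $k=3$. However, there is a genuine gap in your treatment of the Lie-type family. For every untwisted Chevalley group $X_\ell(2)$ the Cartan subgroup $H$ is \emph{trivial}, so $B_0=U$ and your chosen pair degenerates to $B=A$; this is an infinite family (e.g.\ $\PSL_n(2)$, $\Sp_{2n}(2)$, $\POmega^{\pm}_{2n}(2)$, $E_8(2)$) and is not rescued by exceptional isomorphisms, which only cover bounded rank. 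The paper devotes a separate and genuinely different argument to this case (Proposition~\ref{prop:chevalleysmall}): it replaces the Borel by a specific maximal parabolic $A$ whose Levi factor has an $A_1(2)\cong S_3$ direct factor, takes $B$ to be the index-two subgroup obtained by passing to $C_3$ inside that factor, and then runs a double-coset computation using the root-subgroup relations $nx_r=x_{w(r)}n$ to show $AnA=AnB$. You would need to supply something of this kind.

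Two further, more minor, issues concern $(ii)\Rightarrow(iii)$. Enumerating the subgroup lattice of $\mathrm{Th}$ (order $\approx 9\times 10^{16}$) is not computationally feasible, so $\mathrm{Th}$ cannot be lumped in with the ``five smaller groups''; the paper instead observes that $(ii)$ forces $A\cap A^t\neq 1$ for all $t$, i.e.\ $b(T,A)\geq 3$, and invokes Burness's base-size classification to reduce to just two maximal overgroups of $A$. The same base-size observation is what makes the Monster tractable: \cite[Theorem~3.1]{burnesssoluble} shows $K=2.\mathbb{B}$ is the \emph{unique} subgroup with $b(\mathbb{M},K)\geq 3$, and quasisimplicity of $K$ then finishes it in one line. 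Your plan of walking through the full list of maximal subgroups of $\mathbb{M}$ with ad hoc character-theoretic constraints is much heavier and leaves the key reduction (to base size at least $3$) implicit; making that reduction explicit would be essential to carry out your plan for both $\mathrm{Th}$ and $\mathbb{M}$.
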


Property (ii) of this theorem implies that, in the transitive action of $T$ on the set of right cosets of $A$, the proper normal subgroup $B$ of the point stabiliser $A$ is supplemented by all corresponding two-point stabilisers. This is a stronger property than all two-point stabilisers being nontrivial, which is equivalent to the action having base size at least three.

\section{Background}
\label{sec:prel}

In this section, we provide the background on spreading permutation groups, and on primitive groups of diagonal type, that will be necessary to prove Theorems~\ref{thm:spreaddiag} and \ref{thm:intersecting}.

Given a set $\Omega$, a multiset (or set) $J$ of elements of $\Omega$ and a point $\omega \in \Omega$, we write $\mu_J(\omega)$ to denote the multiplicity of $\omega$ in $J$. We say that $J$ is \emph{trivial} if either $\mu_J$ is constant on $\Omega$, or there exists a unique point $\omega \in \Omega$ such that $\mu_J(\omega) \ne 0$. Additionally, a sum $J+K$ of multisets is defined so that $\mu_{J+K}(\omega) = \mu_J(\omega) + \mu_K(\omega)$ for all $\omega$, and similarly for scalar products of multisets. As usual, the cardinality of $J$ is $|J| := \sum_{\omega \in \Omega} \mu_J(\omega)$.

We are now able to define spreading permutation groups.

\begin{defn}[{\cite[\S5.5]{synchacs}}]
\label{defn:spreading}
A transitive permutation group $G$ on $\Omega$ is \emph{non-spreading} if there exist a nontrivial subset $X$ of $\Omega$ and a nontrivial multiset $J$ of elements of $\Omega$, such that $|J|$ divides $|\Omega|$ and $\sum_{x \in X^g} \mu_J(x)$ is constant for all $g \in G$. Otherwise, $G$ is \emph{spreading}.
\end{defn}

We shall call a pair $(X,J)$ satisfying the above properties a \emph{witness} to $G$ being non-spreading. Recall from \S\ref{sec:intro} that each imprimitive group is non-spreading. It is also immediate from Definition~\ref{defn:spreading} that if $G$ is non-spreading, then so is each transitive subgroup of $G$. The following theorem provides a useful method for finding a witness to a group being non-spreading.

\begin{theorem}[{\cite{polarspaces}}]
\label{thm:ABlemma}
Let $G$ be a finite group acting transitively on a finite set $\Omega$, let $A$ be a subgroup of $G$, and let $B$ a proper normal subgroup of $A$. Additionally, let $\omega_1, \ldots, \omega_k \in \Omega$ with $k\geqslant 2$, such that $\omega_i^B \neq \omega_j^B$ for $i \neq j$, and $\omega_1^A = \omega_1^B \cup \omega_2^B \cup \ldots \cup \omega_k^B$. Finally, let $\Delta := \{Y \in X^G \mid Y \cap \omega_1^A \neq \varnothing\}$ for some non-empty $X \subsetneq \Omega$. If $B$ is transitive on each $A$-orbit of $\Delta$, then $(X,\Omega + k\omega_1^B - \omega_1^A)$ is a witness to $G$ being non-spreading.
\end{theorem}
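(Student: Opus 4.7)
My plan is to show directly, via a straightforward counting argument, that the prescribed pair $(X,J)$ with $J := \Omega + k\omega_1^B - \omega_1^A$ satisfies the three defining properties of a witness: $X$ is nontrivial (this is assumed), $|J|$ divides $|\Omega|$, and $\sum_{x \in X^g}\mu_J(x)$ is independent of $g \in G$. First I would unpack the multiset $J$: since the $B$-orbits $\omega_1^B,\ldots,\omega_k^B$ are disjoint and all have the same cardinality (as they are permuted transitively by $A$, which preserves the $B$-orbit decomposition of $\omega_1^A$), the multiplicity function is $k$ on $\omega_1^B$, $0$ on $\omega_2^B\cup\cdots\cup\omega_k^B$, and $1$ elsewhere. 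Consequently $|J|=|\Omega|+k|\omega_1^B|-k|\omega_1^B|=|\Omega|$, which clearly divides $|\Omega|$, and $\mu_J$ takes at least two values and has support of cardinality greater than one, so $J$ is nontrivial.

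For the constancy condition, I would use linearity to write, for any $Y = X^g$,
\[
\sum_{x \in Y}\mu_J(x) = |X| + k\,|Y \cap \omega_1^B| - |Y \cap \omega_1^A|,
\]
so it suffices to prove that $f(Y) := k\,|Y \cap \omega_1^B| - |Y \cap \omega_1^A|$ is constant on $X^G$. If $Y \notin \Delta$, then $Y \cap \omega_i^B = \varnothing$ for every $i$, so $f(Y)=0$. For $Y \in \Delta$, using the disjoint decomposition $\omega_1^A = \omega_1^B \sqcup \cdots \sqcup \omega_k^B$, I would rewrite
\[
f(Y) = \sum_{i=2}^k\bigl(|Y \cap \omega_1^B| - |Y \cap \omega_i^B|\bigr),
\]
so it is enough to establish that $|Y \cap \omega_1^B| = |Y \cap \omega_i^B|$ for every $i \in \{2,\ldots,k\}$.

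The remaining (and only substantive) step is this equality, and it is where the two transitivity hypotheses come in. Because $A$ acts transitively on $\omega_1^A$ while preserving its partition into $B$-orbits, for each $i$ there exists $a_i \in A$ with $(\omega_1^B)^{a_i} = \omega_i^B$. Because $Y \in \Delta$ and $A$ preserves $\omega_1^A$, the translate $Y^{a_i}$ lies in the same $A$-orbit of $\Delta$ as $Y$; the hypothesis that $B$ is transitive on this orbit then yields $b \in B$ with $Y^{a_i} = Y^b$. Since $B$ fixes each $\omega_i^B$ setwise, a direct calculation gives
\[
|Y \cap \omega_i^B| = |Y^b \cap \omega_i^B| = |Y^{a_i} \cap \omega_i^B| = |Y \cap (\omega_i^B)^{a_i^{-1}}| = |Y \cap \omega_1^B|,
\]
which is the required equality. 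This forces $f(Y)=0$ for all $Y \in X^G$, and hence the desired sum is identically $|X|$, completing the verification that $(X,J)$ is a witness. There is no real obstacle beyond correctly organising this chain of set-theoretic identities; the key conceptual point is simply that the hypotheses are precisely what is needed to transport the intersection with $\omega_1^B$ across the $A$-orbit using only elements of $B$.
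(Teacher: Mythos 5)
The paper states Theorem~\ref{thm:ABlemma} as a citation from \cite{polarspaces} and does not reproduce a proof, so a direct line-by-line comparison is not possible; I can only assess your argument on its own terms. Your proof is correct in substance: the linearisation of $\sum_{x\in Y}\mu_J(x)$, the reduction to showing $|Y\cap\omega_1^B| = |Y\cap\omega_i^B|$ for all $Y\in\Delta$, and the transport argument (conjugate $\omega_1^B$ to $\omega_i^B$ by $a_i\in A$, replace $a_i$ by $b\in B$ using transitivity of $B$ on the $A$-orbit of $Y$ in $\Delta$, then use $B$-invariance of each $\omega_i^B$) are all sound, and the chain of identities checks out. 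This is exactly the natural way to verify the constancy condition directly from Definition~\ref{defn:spreading}.

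The one place where you are a little loose is the verification of nontriviality. You say that nontriviality of $X$ ``is assumed,'' but the theorem only assumes $X$ is a non-empty proper subset; a singleton $X$ would satisfy that yet be trivial. It does follow from the hypotheses that $|X|\ge 2$: if $X$ were a singleton then $\Delta$ would be a single $A$-orbit in bijection with $\omega_1^A$, and $B$ transitive on it would force $\omega_1^B=\omega_1^A$, contradicting $k\ge 2$. Similarly, your claim that $\mu_J$ ``has support of cardinality greater than one'' silently excludes the degenerate configuration $\omega_1^A=\Omega$ with $|\omega_1^B|=1$; that configuration forces $B$ to act trivially on $\Omega$, whence the transitivity hypothesis would force $A$ to stabilise the proper non-empty set $X$ while acting transitively on $\Omega$, a contradiction. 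So both facts are true, but they need the one-line arguments just given rather than being immediate. With those small patches, the proof is complete.
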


Now, let $T$ be a non-abelian finite simple group. In the remainder of this section, we shall consider the primitive groups of diagonal type with socle $T \times T$. To define these groups, let $T \times T$ act on the set $\Omega:=T$ by $x^{(t_1,t_2)}:=t_1^{-1}xt_2$. In addition, let $\rho_1:T\times T \to \Sym(\Omega)$ be the corresponding permutation representation, and let $\rho_2:\Aut(T) \to \Sym(\Omega)$ be the natural permutation representation of $\Aut(T)$ on $\Omega$. Then $\rho_2(\Inn(T))=\rho_1(\{(t,t)\mid t\in T\})$. Finally, let $\sigma\in\Sym(\Omega)$ such that $x^\sigma=x^{-1}$ for all $x\in T$. Define $W(T):=\langle \rho_1(T\times T),\rho_2(\Aut(T)),\sigma\rangle\leqslant\Sym(\Omega)$, and note that $W(T)\cong (T\times T).(\Out(T) \times C_2)$. The \emph{groups of diagonal type} with socle $T \times T$ are precisely the subgroups of $W(T)$ containing $\rho_1(T\times T)$ (see, for example, \cite[\S1]{lps}). We will often drop the use of $\rho_1$ and $\rho_2$, and allow elements (and subgroups) of $T\times T$ and $\Aut(T)$ to denote permutations (and subgroups) of $\Sym(\Omega)$.

In order to derive Theorem~\ref{thm:spreaddiag} from Theorem~\ref{thm:intersecting} (for most non-abelian finite simple groups), we require the following corollary of Theorem~\ref{thm:ABlemma}. Throughout the remainder of this section, we will use the elementary fact that if $G$ is a transitive permutation group on a set $\Sigma$ and $H\leqslant G$, then $H$ acts transitively on $\Sigma$ if and only if $G=HG_\alpha$ for each $\alpha\in\Sigma$.

\begin{corollary}
\label{cor:diagonalaction}
Let $T$ be a non-abelian finite simple group, and suppose that there exists a proper subgroup $A$ of $T$ and a proper normal subgroup $B$ of $A$ such that $A = B(A \cap A^\tau)$ for all $\tau \in \Aut(T)$. Then $(A,\Omega+|A:B|B-A)$ is a witness to the diagonal type group $W(T)$ being non-spreading in its action on $\Omega = T$.
\end{corollary}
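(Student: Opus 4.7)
The plan is to apply Theorem~\ref{thm:ABlemma} to $G := W(T)$ with $\mathcal{A} := \{1\} \times A$ and $\mathcal{B} := \{1\} \times B$, both viewed as subgroups of $T \times T \leq W(T)$. Under the given action, $(1,a)$ sends $x \in \Omega$ to $xa$, so taking $\omega_1 := 1$ yields $\omega_1^{\mathcal{A}} = A$. Choosing coset representatives $a_1 = 1, a_2, \ldots, a_k$ of $B$ in $A$ with $k := |A:B| \geq 2$ and setting $\omega_i := a_i$, we have $\omega_i^{\mathcal{B}} = a_i B$, so these cosets form a pairwise distinct partition of $\omega_1^{\mathcal{A}} = A$. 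Then let $X := A$, which is a nontrivial subset of $\Omega$; it remains only to verify the $\mathcal{B}$-transitivity condition on the $\mathcal{A}$-orbits within $\Delta = \{Y \in A^{W(T)} \mid Y \cap A \neq \varnothing\}$.

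Next I would classify those $\mathcal{A}$-orbits. Since $A$ is closed under inversion, $\sigma$ fixes $A$ setwise, and one checks directly that every $W(T)$-translate of $A$ has the form $t_1^{-1}\tau(A)t_2$ for some $t_1, t_2 \in T$ and $\tau \in \Aut(T)$. Given $Y \in \Delta$ and any $c \in Y \cap A$, applying $(1,c^{-1}) \in \mathcal{A}$ sends $Y$ to $Yc^{-1}$, which contains $1$. A short manipulation, writing $t_1 t_2^{-1} = \tau(a_0)$ for some $a_0 \in A$ and absorbing the conjugation by $t_2$ into an inner automorphism, shows that any $W(T)$-translate of $A$ containing $1$ equals $\tau'(A)$ for some $\tau' \in \Aut(T)$. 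Hence every $\mathcal{A}$-orbit in $\Delta$ has a representative of the form $\tau(A)$ with $\tau \in \Aut(T)$, and conversely each such $\tau(A)$ lies in $\Delta$ because $1 \in \tau(A) \cap A$.

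Finally, the setwise stabilizer of $Y = \tau(A)$ in $\mathcal{A}$ equals $\{1\} \times (A \cap \tau(A))$, so $\mathcal{B}$-transitivity on the $\mathcal{A}$-orbit of $Y$ is equivalent to $\mathcal{A} = \mathcal{B}\mathcal{A}_Y$, which (dropping the constant first coordinate) reads $A = B(A \cap A^{\tau})$. This is precisely our hypothesis. All conditions of Theorem~\ref{thm:ABlemma} are therefore satisfied, and its conclusion yields that $(X, \Omega + k\omega_1^{\mathcal{B}} - \omega_1^{\mathcal{A}}) = (A, \Omega + |A:B|B - A)$ is a witness to $W(T)$ being non-spreading. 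The main subtlety, and the step I expect to be the chief obstacle, is the orbit analysis of the second paragraph: reducing a general element of $\Delta$ to a subgroup of the form $\tau(A)$ via $\mathcal{A}$, and recognising that the freedom to conjugate by elements of $T$ gets absorbed into the $\Aut(T)$ quantifier appearing in the hypothesis.
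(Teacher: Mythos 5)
Your proposal is correct and matches the paper's proof in all essentials: apply Theorem~\ref{thm:ABlemma} with $G=W(T)$, the subgroups $1\times A$ and $1\times B$, basepoint $\omega_1 = 1_T$ and $X=A$, identify the $(1\times A)$-orbits on $\Delta$ as the sets $\{A^\tau a : a\in A\}$ with setwise stabiliser $1\times(A\cap A^\tau)$, and invoke the hypothesis $A=B(A\cap A^\tau)$. The only cosmetic difference is that the paper streamlines the orbit classification by writing $G = G_{\omega_1}\cdot(1\times T)$ and reading off $X^G = \{A^\tau t\}$ directly, whereas you derive the same normal form by a hand computation with $t_1^{-1}\tau(A)t_2$.
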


\begin{proof}
Let $G:=W(T)$, $M:=\rho_1(1\times T)$, $\omega_1:=1_T$, and $D:=G_{\omega_1} = \langle \rho_2(\Aut(T)),\sigma\rangle\cong\Aut(T)\times C_2$. Note that $G=DM$, as $M$ acts transitively on $\Omega$. Let $X:=A\subsetneq \Omega$. Then $\omega_1^{1 \times B} = B < A = \omega_1^{1 \times A}$, and $X^{G}=X^{DM}=\{A^\tau t\mid \tau\in \Aut(T),t\in T\}$. Next, define $\Delta:={\{Y\in X^G\mid Y\cap \omega_1^{1\times A}\neq \varnothing\}}$, and observe that $\Delta = \{A^\tau a\mid \tau\in \Aut(T),a\in A\}$. Thus the orbits of $1 \times A$ on $\Delta$ are $\Delta_\tau=\{A^\tau a\mid a\in A\}$ for each $\tau\in \Aut(T)$. Additionally, the stabiliser in $1\times A$ of $A^\tau\in \Delta_\tau$ is $1\times (A\cap A^\tau)$. As $A=B(A\cap A^\tau)$, we observe that $1\times B$ is transitive on $\Delta_\tau$. The result now follows from Theorem~\ref{thm:ABlemma}, applied to the groups $G$, $1 \times A$ and $1 \times B$. 
\end{proof}

Our next lemma translates Property (i) of Theorem~\ref{thm:intersecting} into the language of coset actions.

\begin{lemma}
\label{lem:cosetaction}
Let $T$ be a non-abelian finite simple group, $R \in \{T,\Aut(T)\}$, and $A$ a proper subgroup of $T$. Suppose also that $A$ has a proper normal subgroup $B$ that acts transitively on each orbit of $A$ in its action on the right cosets of $A$ in $R$. If $R = \Aut(T)$, or if all $\Aut(T)$-conjugates of $A$ are conjugate in $T$, then $A = B(A \cap A^\tau)$ for all $\tau \in \Aut(T)$.
\end{lemma}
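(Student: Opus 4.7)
My plan is to reinterpret the transitivity hypothesis on the coset action via the orbit–stabiliser principle recalled immediately before the statement of the lemma, and then to pass from $R$-conjugates of $A$ to $\Aut(T)$-conjugates of $A$ in each of the two cases separately.

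First I would set up the coset action carefully. The group $A$ acts on the set of right cosets $\{Ar \mid r \in R\}$ by right multiplication, and a standard computation shows that the stabiliser in $A$ of the point $Ar$ is precisely $A \cap A^r$, where $A^r = r^{-1}Ar$. Since $A$ is of course transitive on any of its own orbits, the fact recalled in the paragraph preceding the statement tells us that the subgroup $B \leq A$ is transitive on the $A$-orbit containing $Ar$ if and only if $A = B(A \cap A^r)$. Therefore the hypothesis that $B$ is transitive on every $A$-orbit on $R/A$ is equivalent to the assertion that $A = B(A \cap A^r)$ for every $r \in R$.

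It then remains to convert this into a statement about all $\tau \in \Aut(T)$. If $R = \Aut(T)$, we may simply take $r = \tau$ to obtain the conclusion immediately. If instead $R = T$ and all $\Aut(T)$-conjugates of $A$ are $T$-conjugate, then for any $\tau \in \Aut(T)$ we may write $A^\tau = A^t$ for some $t \in T$, whence $A \cap A^\tau = A \cap A^t$, and so the equality $A = B(A \cap A^t)$ guaranteed by the previous paragraph yields $A = B(A \cap A^\tau)$, as required. There is no serious obstacle here; the lemma is essentially a bookkeeping exercise, and its content lies entirely in the dictionary between transitivity on coset orbits and the product formula $A = B(A \cap A^r)$.
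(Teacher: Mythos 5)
Your argument is correct and is essentially identical to the paper's own proof: both identify the point stabilisers as $A \cap A^r$, invoke the transitivity fact to deduce $A = B(A \cap A^r)$ for all $r \in R$, and then handle the two cases for $R$ in the same way. The only difference is that you spell out the orbit--stabiliser step in slightly more detail.
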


\begin{proof}
Let $\Sigma$ be the set of right cosets of $A$ in $R$. Then the point stabilisers in the action of $A$ on $\Sigma$ are precisely the subgroups $A \cap A^t$ with $t \in R$. As $B$ is transitive on each orbit in this action, it follows that $A = B(A \cap A^t)$ for all $t \in R$. Hence we are done if $R = \Aut(T)$. Assume therefore that all $\Aut(T)$-conjugates of $A$ are conjugate in $T$. Then for each $\tau \in \Aut(T)$, there exists $t \in T$ such that $A^\tau = A^t$, and so $B(A \cap A^\tau) = B(A \cap A^t) = A$.
\end{proof}

In the case where $T$ is one of the six sporadic groups listed in Property (iii) of Theorem~\ref{thm:intersecting}, we will show that $W(T)$ is non-spreading using the following lemma, which involves elementary character theory. The multiset in the witness here is similar to that of Theorem~\ref{thm:ABlemma}.

\begin{lemma}
\label{lem:chars}
Let $T$ be a non-abelian finite simple group, and let $\mathrm{Irr}(T)$ be the set of irreducible complex characters of $T$. Suppose also that there exist pairwise non-conjugate elements $r$, $s_1$ and $s_2$ of $T$, such that:
\begin{enumerate}[label={(\roman*)},font=\upshape]
\item $|s_1^T| = |s_2^T|$; and
\item $\chi(r^\tau) = 0$ for all $\tau \in \Aut(T)$ and all $\chi \in \mathrm{Irr}(T)$ with $\chi(s_1) \ne \chi(s_2)$.
\end{enumerate}
Then $(r^T,\Omega+s_1^T-s_2^T)$ is a witness to the diagonal type group $W(T)$ being non-spreading in its action on $\Omega = T$.
\end{lemma}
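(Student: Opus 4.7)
The plan is to verify directly that $(X, J) := (r^T, \Omega + s_1^T - s_2^T)$ satisfies Definition~\ref{defn:spreading} for the action of $G := W(T)$ on $\Omega = T$. Writing $\psi := \mathbf{1}_{s_1^T} - \mathbf{1}_{s_2^T}$ for the relevant class function on $T$, we have $\mu_J = 1 + \psi$; since $s_1$ and $s_2$ are non-conjugate, the classes $s_1^T$ and $s_2^T$ are disjoint, so $\mu_J$ takes values in $\{0, 1, 2\}$ and defines a valid, non-trivial multiset. Condition~(i) gives $|J| = |\Omega|$, so $|J|$ divides $|\Omega|$. The set $X = r^T$ is also non-trivial: first $r \neq 1$, since otherwise condition~(ii) applied with $\tau = \mathrm{id}$ would force $\chi(s_1) = \chi(s_2)$ for every $\chi \in \mathrm{Irr}(T)$, contradicting that $s_1$ and $s_2$ lie in distinct $T$-classes; then $|r^T| \geq 2$ follows from $Z(T) = 1$, and clearly $r^T \subsetneq T$.

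Since $|X^g| = |X|$ for every $g \in G$, the remaining task is to show that $F(g) := \sum_{y \in X} \psi(y^g)$ is constant on $G$. Expanding $\psi$ in terms of irreducible characters yields
\[\psi = \frac{1}{|C_T(s_1)|}\sum_{\chi \in \mathrm{Irr}(T)} \bigl(\overline{\chi(s_1)} - \overline{\chi(s_2)}\bigr)\,\chi,\]
so only characters $\chi$ with $\chi(s_1) \neq \chi(s_2)$ contribute to $F(g)$.

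Every $g \in G$ factors as $g = (t_1, t_2)\,\tau\,\sigma^{\epsilon}$ with $t_1, t_2 \in T$, $\tau \in \Aut(T)$ and $\epsilon \in \{0, 1\}$. Unpacking the actions on $y \in T$ gives
\[y^g = \begin{cases}(t_1^\tau)^{-1}\,y^\tau\,t_2^\tau, & \epsilon = 0,\\ (t_2^\tau)^{-1}\,(y^\tau)^{-1}\,t_1^\tau, & \epsilon = 1.\end{cases}\]
Because $\psi$ is a class function on $T$, each $\psi(y^g)$ simplifies to $\psi(z c)$ for some $c \in T$ depending on $(t_1, t_2, \tau, \epsilon)$, where $z := y^\tau$ (if $\epsilon = 0$) or $z := (y^\tau)^{-1}$ (if $\epsilon = 1$); these $z$ range over $(r^\tau)^T$ or $((r^{-1})^\tau)^T$ respectively. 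Applying the standard identity $\sum_{z \in z_0^T}\chi(z c) = |z_0^T|\,\chi(z_0)\,\chi(c)/\chi(1)$ then expresses $F(g)$ as a linear combination in which the only possibly nonzero terms involve $\chi(r^\tau)$ (when $\epsilon = 0$) or $\chi((r^\tau)^{-1}) = \overline{\chi(r^\tau)}$ (when $\epsilon = 1$). Since condition~(ii) forces $\chi(r^\tau) = 0$ for each contributing $\chi$, we conclude $F(g) = 0$ for all $g$.

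The only substantive obstacle is the bookkeeping around how $\Aut(T)$ and $\sigma$ interact with $T \times T$ inside $W(T)$; in particular, $\sigma$ conjugates $(t_1, t_2)$ to $(t_2, t_1)$ and commutes with the image of $\Aut(T)$, while elements of $\Aut(T)$ act coordinatewise on $T \times T$. Once this normalisation and the $(t_1, t_2, \tau, \epsilon)$-decomposition are in place, the character-theoretic cancellation is immediate, yielding $F \equiv 0$ and hence that $(X, J)$ is a witness.
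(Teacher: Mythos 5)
Your proof is correct and follows essentially the same route as the paper's: after decomposing a general $g \in W(T)$ so that $X^g$ becomes a translated conjugacy class $m^T c$ with $m = (r^\tau)^{\pm 1}$, you show via a character computation that the contributions from $s_1^T$ and $s_2^T$ cancel, and the class-sum identity $\sum_{z \in z_0^T}\chi(zc) = |z_0^T|\chi(z_0)\chi(c)/\chi(1)$ you invoke is precisely what underlies the structure-constant formula from Isaacs that the paper cites. The only difference is organizational: you expand the class function $\psi = \mathbf{1}_{s_1^T} - \mathbf{1}_{s_2^T}$ in irreducibles and show directly that $\sum_{y\in X}\psi(y^g)=0$, whereas the paper computes $|X^g\cap s_1^T|$ and $|X^g\cap s_2^T|$ separately and then equates them; this is cosmetic.
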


\begin{proof}
The set $X:=r^T$ and the multiset $J:=\Omega+s_1^T-s_2^T$ are clearly nontrivial, and (i) implies that $|J| = |\Omega|$. Thus by Definition~\ref{defn:spreading}, it remains to prove that $\sum_{x \in X^g} \mu_J(x)$ is constant for all $g \in G:=W(T)$. In fact, we will show that $|X^g \cap s_1^T| = |X^g \cap s_2^T|$ for all $g$, and it will follow immediately that $\sum_{x \in X^g} \mu_J(x) = |X|$ for all $g$.

As above, $G_{1_T} = \langle \rho_2(\Aut(T)),\sigma\rangle$, and the transitivity of $M:=\rho_1(1 \times T)$ on $\Omega$ implies that $G = G_{1_T}M$. Hence for each $g \in G$, there exist $\tau \in \Aut(T)$, $\varepsilon = \pm 1$ and $t \in T$ such that $X^g = ((r^\tau)^{\varepsilon})^Tt$. Letting $m:=(r^\tau)^\varepsilon$, $i \in \{1,2\}$ and $h \in s_i^T$, we observe from \cite[Problem 3.9]{isaacs} that the set $\{(x,y) \mid x \in m^T, y \in t^T, xy = h\}$ has size \[a_{g,i}:=\frac{|m^T||t^T|}{|T|}\sum_{\chi \in \mathrm{Irr}(T)}\frac{\chi(m)\chi(t)\overline{\chi(s_i)}}{\chi(1)},\] where $\overline{\chi(s_i)}$ is the complex conjugate of $\chi(s_i)$. Since $a_{g,i}$ is constant for all $h \in s_i^T$, and $|m^Tt^u \cap s_i^T|$ is constant for all $u \in T$, it follows that $|X^g \cap s_i^T| = a_{g,i}|s_i^T|/|t^T|$. As $\chi((r^\tau)^{-1}) = \overline{\chi(r^\tau)}$ for each character $\chi$ of $T$, (i) and (ii) yield $|X^g \cap s_1^T| = |X^g \cap s_2^T|$ for all $g \in G$, as required.
\end{proof}

\begin{remark}
For many small non-abelian finite simple groups $T$, it is possible to choose an element $r \in T$ satisfying Property (ii) of Lemma~\ref{lem:chars} when $s_1$ and $s_2$ are non-conjugate elements of $T$ such that $s_1^{-1} = s_2$, or more generally, such that $\langle s_1 \rangle = \langle s_2 \rangle$. The non-abelian finite simple groups with all elements conjugate to their inverses are classified in \cite[Theorem 1.2]{realelts}, and \cite[Corollary B.1]{rationalelts} shows that $\PSp_6(2)$ and $\POmega^+_8(2)$ are the only such groups where $u,v \in T$ are conjugate whenever $\langle u \rangle = \langle v \rangle$. Denoting conjugacy classes as in the \textsc{Atlas} \cite{ATLAS}, we observe by inspecting the \textsc{Atlas} character tables of these two groups that we can choose $r \in \mathrm{7A}$, $s_1 \in \mathrm{6A}$ and $s_2 \in \mathrm{6B}$ when $T = \PSp_6(2)$, and $r \in \mathrm{7A}$, $s_1 \in \mathrm{15A}$ and $s_2 \in \mathrm{15B}$ when $T = \POmega^+_8(2)$. We leave open the problem of determining which non-abelian finite simple groups, if any, do not contain pairwise non-conjugate elements $r$, $s_1$ and $s_2$ satisfying Properties (i) and (ii) of Lemma~\ref{lem:chars}.
\end{remark}

\section{Non-abelian finite simple groups and groups of diagonal type}
\label{sec:simple}

In this section, we show that Property (i) of Theorem~\ref{thm:intersecting} holds for all non-abelian finite simple groups other than the sporadic groups $\mathrm{J}_1$, $\mathrm{M}_{22}$, $\mathrm{J}_3$, $\mathrm{McL}$, $\mathrm{Th}$ and $\mathbb{M}$, and then prove Theorems~\ref{thm:intersecting} and \ref{thm:spreaddiag}. Note that we address the Tits group ${}^2F_{4}(2)'$ together with the sporadic groups. Throughout, we use \textsc{Atlas} \cite{ATLAS} notation for the structures of groups.

We first consider the finite simple groups of Lie type, beginning with the Chevalley groups (i.e.~the untwisted groups of Lie type) defined over fields of size $q \ge 3$, then the twisted groups of Lie type, and finally the Chevalley groups with $q = 2$.

\begin{proposition}
\label{prop:chevalleylarge}
Let $T$ be a finite simple Chevalley group $X_\ell(q)$, with $q \ge 3$. Then Property \upshape{(i)} of Theorem~\ref{thm:intersecting} holds for $T$.
\end{proposition}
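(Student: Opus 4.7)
The plan is to take $A$ to be a Borel subgroup of $T$ and $B$ to be its unipotent radical. Write $A = UH$, where $U$ is the unipotent radical (a Sylow $p$-subgroup of $T$ for $p$ the defining characteristic of $T$) and $H$ is a split maximal torus, and let $B := U$. Then $A$ is proper in $T$, since $T$ is non-solvable and $A$ is solvable. Moreover, $B$ is a proper normal subgroup of $A$, since $A/B \cong H$, which is non-trivial when $q \ge 3$: this is the sole use of the hypothesis on $q$, the potentially problematic case $A_1(3)$ being excluded because $\PSL_2(3) \cong A_4$ is not simple.

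To verify $A = B(A \cap A^\tau)$ for all $\tau \in \Aut(T)$, I would apply Lemma~\ref{lem:cosetaction} with $R := T$. First, observe that all $\Aut(T)$-conjugates of $A$ are $T$-conjugate: any $\tau \in \Aut(T)$ sends the Sylow $p$-subgroup $U$ to another Sylow $p$-subgroup, which is $T$-conjugate to $U$ by Sylow's theorem, and since $A = N_T(U)$, the same holds for $A$. It therefore remains to show that $B$ acts transitively on each $A$-orbit of the action of $A$ on the right cosets of $A$ in $T$.

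For this transitivity claim, I would invoke the Bruhat decomposition $T = \bigsqcup_{w \in W} A \dot w A$, where $W$ is the Weyl group of $T$ and $\dot w \in N_T(H)$ is a representative of $w$. This parametrises the $A$-orbits on $T/A$ by $W$, and the stabiliser in $A$ of the coset $A \dot w$ contains $H$, because $H = H^{\dot w} \leq A^{\dot w}$ and $H \leq A$. Consequently $B \cdot (A \cap A^{\dot w}) \geq UH = A$, so $B$ is transitive on the corresponding $A$-orbit. As this holds for every $w \in W$, the hypothesis of Lemma~\ref{lem:cosetaction} is satisfied, yielding the conclusion.

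The argument is essentially structural, relying only on the Bruhat decomposition, the identification $A = N_T(U)$, and Sylow's theorem, so I do not anticipate any serious obstacle. The only subtlety is checking that $H$ is non-trivial for every simple Chevalley group with $q = 3$, but this is easy to handle uniformly once the non-simple case $A_1(3)$ is set aside.
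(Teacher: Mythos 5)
Your proposal is correct and follows the same overall strategy as the paper: take $A$ to be the Borel subgroup $N_T(U) = UH$, take $B := U$, note that $H \ne 1$ when $q \ge 3$ (and $T \ne A_1(3)$), use the Sylow argument to reduce to $R = T$ in Lemma~\ref{lem:cosetaction}, and then invoke the Bruhat decomposition to handle the transitivity of $U$ on $A$-orbits. The one place where you diverge is the transitivity step: the paper writes each element in the refined Bruhat form $x = a_x n_{w_x} u_x$ (Carter, Theorem 8.4.3) and uses the uniqueness of the Weyl group element in a double coset (Carter, Proposition 8.2.3) to conclude that $AxU = An_{w_x}U$; you instead observe directly that $H$ normalises itself under $\dot w \in N$, so $H \le A \cap A^{\dot w}$ and hence $U(A \cap A^{\dot w}) \supseteq UH = A$. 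Your version is slightly shorter and avoids the refined decomposition and the uniqueness result, relying only on the coarse Bruhat decomposition to parametrise the $A$-orbits; either route is perfectly sound, and the saving is modest since both are standard $(B,N)$-pair facts.
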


\begin{proof}
Let $p$ be the prime dividing $q$, and let $U$ be a Sylow $p$-subgroup of $T$. Then $T$ contains a subgroup $N$ such that $A:=N_T(U)$ and $N$ form a $(B,N)$-pair, and the normal subgroup $H:=A \cap N$ of $N$ complements $U$ in $A$ (see \cite[Ch.~7.2--9.4]{carter}). Since $q \ge 3$ (and since $T$ is not the soluble group $A_1(3) = \PSL_2(3)$), we observe from \cite[pp.~121--122]{carter} that $|H| > 1$, and so $U$ is a proper subgroup of $A$. Furthermore, as $A$ is the normaliser in $T$ of the Sylow subgroup $U$, all $\Aut(T)$-conjugates of $A$ are conjugate in $T$.

By Lemma~\ref{lem:cosetaction}, it suffices to show that $A$ and $U$ have the same orbits in the action of $T$ on the set of right cosets of $A$. For each element $w$ of the Weyl group $W:=N/H$, fix  a preimage $n_w \in N$ of $w$ under the natural homomorphism from $N$ to $W$. In addition, let $x$ and $y$ be elements of $T$ so that $Ax$ and $Ay$ lie in a common $A$-orbit. Then the double cosets $AxA$ and $AyA$ are equal. By \cite[Theorem 8.4.3]{carter}, there exist $a_x,a_y \in A$, $w_x,w_y \in W$ and $u_x,u_y \in U < A$ such that $x = a_x n_{w_x} u_x$ and $y = a_y n_{w_y} u_y$. It follows that $A n_{w_x} A = A n_{w_y} A$. As $A$ and $N$ form a $(B,N)$-pair, \cite[Proposition 8.2.3]{carter} yields $n_{w_x} = n_{w_y}$. Thus $AxU = An_{w_x}U = An_{w_y}U = AyU$, i.e.~$Ax$ and $Ay$ lie in the same $U$-orbit, as required.
\end{proof}

\begin{proposition}
\label{prop:twistedlie}
Let $T$ be a finite simple twisted group of Lie type ${}^tX_\ell(q)$. Then Property \upshape{(i)} of Theorem~\ref{thm:intersecting} holds for $T$.
\end{proposition}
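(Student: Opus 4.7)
My plan is to mirror the proof of Proposition~\ref{prop:chevalleylarge}, now exploiting the $(B,N)$-pair that every finite twisted group of Lie type inherits from its ambient simple algebraic group via a Steinberg endomorphism. Concretely, I let $p$ denote the defining characteristic of $T$, fix a Sylow $p$-subgroup $U$, and set $A := N_T(U)$ and $B := U$. The structure theory of twisted groups (see, for instance, \cite[Ch.~13--14]{carter}) gives $A = UH$ for an abelian subgroup $H = A \cap N$ with $H \cap U = 1$, where $A$ and $N$ form a $(B,N)$-pair of $T$; in particular, $U$ is normal in $A$.

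Two further ingredients will do the bulk of the work. First, $T$ admits a Bruhat decomposition $T = \bigcup_{w \in W} A n_w U$ indexed by a set of lifts $\{n_w\}$ of the twisted Weyl group $W = N/H$, with the double cosets $An_w A$ pairwise distinct, precisely as in the untwisted case. Second, since $A$ is the normaliser in $T$ of the Sylow subgroup $U$, all $\Aut(T)$-conjugates of $A$ are conjugate in $T$. By Lemma~\ref{lem:cosetaction}, it then suffices to verify that $U$ is transitive on every $A$-orbit on the right cosets of $A$, and this is done verbatim as in Proposition~\ref{prop:chevalleylarge}: given $Ax, Ay$ in a common $A$-orbit, write $x = a_x n_{w_x} u_x$ and $y = a_y n_{w_y} u_y$; the equality $AxA = AyA$ forces $n_{w_x} = n_{w_y}$ by the uniqueness of Bruhat double coset representatives in a $(B,N)$-pair, and so $AxU = An_{w_x}U = AyU$.

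The main obstacle I anticipate is proving $U$ is a proper subgroup of $A$, that is, $|H| > 1$, uniformly across the twisted families. For the Steinberg series $\PSU_n(q)$, ${}^2D_n(q)$, ${}^3D_4(q)$, and ${}^2E_6(q)$, the order of $H$ is a nontrivial product of evaluations at $q$ of cyclotomic polynomials, and the cases where it would collapse to $1$ (such as $\PSU_3(2)$) are precisely the non-simple cases. For the Suzuki and Ree series ${}^2B_2(q)$, ${}^2G_2(q)$, ${}^2F_4(q)$, one has $|H|$ equal to $q-1$, $q-1$, and $(q-1)^2$ respectively, each strictly greater than $1$ on the range of $q$ for which $T$ is simple; and the Tits group ${}^2F_4(2)'$, as noted at the start of the section, is treated with the sporadic groups rather than here.
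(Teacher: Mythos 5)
Your proposal follows the same route as the paper: take $A := N_T(U)$ and $B := U$ for a Sylow $p$-subgroup $U$ in defining characteristic, invoke the $(B,N)$-pair structure and Bruhat decomposition for twisted groups, and then mimic the untwisted argument to show $U$ and $A$ share orbits on cosets, before concluding via Lemma~\ref{lem:cosetaction}. Your discussion of $|H| > 1$, including the exclusion of the Tits group and the non-simple small cases, is also correct and matches the paper's handling.

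There is, however, one gap that the paper treats carefully and that you elide. You write ``set $A := N_T(U)$ \dots the structure theory of twisted groups gives $A = UH$,'' but the cited source \cite[Ch.~13--14]{carter} only establishes the containment $UH \le N_T(U)$ for the twisted groups; the equality $UH = N_T(U)$ is \emph{not} stated there, and it is needed both for $U \trianglelefteq A$ with the asserted quotient and for the crucial fact that all $\Aut(T)$-conjugates of $A$ are $T$-conjugate (so that Lemma~\ref{lem:cosetaction} applies). The paper proves this equality by observing that, were $UH$ a proper subgroup of $N_T(U)$, some lift $n_w \in N$ of a fundamental reflection $w$ would have to normalise $U$ (via \cite[Proposition 8.2.2 \& Theorem 8.3.2]{carter}), which contradicts what one extracts from \cite[Proposition 13.6.1]{carter} and the proof of \cite[Theorem 13.5.4]{carter}. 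Without some such argument, your identification of the $(B,N)$-pair ``Borel'' with the Sylow normaliser rests on an unstated fact, and this is precisely the point where the twisted case is subtler than the Chevalley case.
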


\begin{proof}
Note that $T \not\cong {}^2F_4(2)'$. As above, let $p$ be the prime dividing $q$, and let $U$ be a Sylow $p$-subgroup of $T$. By \cite[Ch.~13--14]{carter}, $T$ contains subgroups $N$, $H$ and $W$ analogous to those in the proof of Proposition~\ref{prop:chevalleylarge}, so that $A:=U \sd H$ and $N$ form a $(B,N)$-pair, and $W = N/H$. Furthermore, $H$ is nontrivial for all $q$, as $T$ is not isomorphic to ${}^2A_2(2) = \PSU_3(2)$, ${}^2B_2(2) = \mathrm{Sz}(2)$ or ${}^2F_4(2)$. Although \cite{carter} directly states only that $A \le N_T(U)$, we can show that $A = N_T(U)$. Indeed, if this were not the case, then some preimage $n_w \in N$ of a \emph{fundamental reflection} $w \in W$ would normalise $U$ (see \cite[Proposition 8.2.2 \& Theorem 8.3.2]{carter}). However, we deduce from Proposition 13.6.1 and the proof of Theorem 13.5.4 in \cite{carter} that no such $w$ exists, and so $A = N_T(U)$. We now proceed exactly as in the proof of Proposition~\ref{prop:chevalleylarge}, this time using Proposition 13.5.3 of \cite{carter} instead of Theorem 8.4.3.
\end{proof}

\begin{proposition}
\label{prop:chevalleysmall}
Let $T$ be a finite simple Chevalley group $X_\ell(2)$. Then Property \upshape{(i)} of Theorem~\ref{thm:intersecting} holds for $T$.
\end{proposition}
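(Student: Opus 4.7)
The plan is to adapt the approach of Proposition~\ref{prop:chevalleylarge} to the case $q = 2$. Here the split torus $H$ is trivial, so $N_T(U) = U$, and the choice $A = N_T(U), B = U$ no longer gives $B$ proper in $A$. Instead, I would take $A = P_\alpha$, a minimal parabolic of $T$ for some simple root $\alpha$, with unipotent radical $Q_\alpha$ and Levi complement $L_\alpha \cong \SL_2(2) \cong S_3$. Since $L_\alpha$ is non-perfect (unlike the Levi factors appearing in Propositions~\ref{prop:chevalleylarge} and~\ref{prop:twistedlie}), the preimage $B = Q_\alpha \rtimes A_3$ of the unique index-$2$ subgroup $A_3 \le L_\alpha$ is a proper normal subgroup of $A$ of index $2$, with $A/B \cong C_2$.

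To verify $A = B(A \cap A^\tau)$ for all $\tau \in \Aut(T)$, it suffices to show $A \cap A^\tau$ contains an element mapping nontrivially to $A/B \cong L_\alpha/A_3 \cong C_2$. The key observation is that at least one of the root subgroups $U_\alpha, U_{-\alpha}$ lies in $A \cap A^\tau$: each is a Sylow $2$-subgroup of $L_\alpha$ generated by a transposition of $S_3$ lying outside $A_3$, and hence maps nontrivially to $A/B$.

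For $\tau \in T$: writing $\tau = b_1 n_w b_2$ via Bruhat with $b_i \in U \le A$, we find $A^\tau = b_2^{-1} A^{n_w} b_2$, a conjugate of $A^{n_w}$ by an element of $A$. Since $A/B$ is abelian, the image of $A \cap A^\tau$ in $A/B$ coincides with that of $A \cap A^{n_w}$. Using $P_\alpha = \langle U, U_{-\alpha}\rangle$, the root subgroups of $A$ are exactly $\{U_\beta : \beta \in \Phi^+ \cup \{-\alpha\}\}$, so $U_\alpha \le A \cap A^{n_w}$ if and only if $w(\alpha) \in \Phi^+ \cup \{-\alpha\}$, while $U_{-\alpha} \le A \cap A^{n_w}$ if and only if $w(\alpha) \in \Phi^- \cup \{\alpha\}$; these two conditions together cover all of $\Phi$, so at least one of $U_\alpha, U_{-\alpha}$ lies in $A \cap A^{n_w}$. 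For $\tau \in \Aut(T) \setminus T$, which for $q = 2$ reduces to graph automorphisms $\sigma$, $A^\sigma = P_{\sigma(\alpha)}$ is another minimal parabolic containing the Borel $U$, so $U_\alpha \le U \le A^\sigma$ directly.

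The main step is the verification that at least one of $U_\alpha, U_{-\alpha}$ lies in $A \cap A^{n_w}$ for every $w \in W$; this is elementary but relies on the identification of root subgroups inside $P_\alpha$ and on the standard Bruhat decomposition, which must be applied uniformly across all simple Chevalley types $X_\ell(2)$.
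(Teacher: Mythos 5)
Your overall strategy matches the spirit of the paper's: take a parabolic $A$ whose Levi has an $S_3 \cong A_1(2)$ direct factor, let $B$ be its index-two subgroup, and show $A \cap A^\tau$ always meets $A \setminus B$. The choice of parabolic is genuinely different, however. You take a \emph{minimal} parabolic $P_\alpha$ for an arbitrary simple root $\alpha$, whereas the paper takes the parabolic $P_{\Pi \setminus S}$ with $S$ the diagram-automorphism orbit of the node adjacent to a leaf $r$. The paper's larger parabolic is stable (up to $T$-conjugacy) under all graph automorphisms, which is exactly what Lemma~\ref{lem:cosetaction} needs, so the paper only has to analyse double cosets $AnA$ inside $T$. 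Your covering computation, namely that $\Phi = (\Phi^+ \cup \{-\alpha\}) \cup (\Phi^- \cup \{\alpha\})$ forces $U_\alpha$ or $U_{-\alpha}$ into $A \cap A^{n_w}$, is the same root-system combinatorics the paper packages as $Anx_rB = AnB$, and your direct argument for inner automorphisms is clean and correct.

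The gap is in the outer case. The set $\Aut(T) \setminus T$ does not consist only of graph automorphisms: a general element is $\iota_t \sigma$ with $t \in T$ and $\sigma$ a graph automorphism, so $A^\tau = P_{\sigma(\alpha)}^t$. When $\sigma(\alpha) \neq \alpha$ — which is unavoidable for some $T$, for instance $A_{2k}(2) \cong \PSL_{2k+1}(2)$, where the diagram flip fixes no simple root — the subgroup $P_{\sigma(\alpha)}$ is not $T$-conjugate to $P_\alpha$, so neither your pure-inner analysis (which only treats $A^{n_w}$) nor your pure-graph observation (which only treats $A^\sigma \supseteq U$) applies to $A^\tau$. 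The omission is fixable: writing $t = u_1 n_w u_2$ with $u_i \in U$, one reduces as before to $A \cap P_{\sigma(\alpha)}^{n_w}$, and then $U_\alpha \le P_{\sigma(\alpha)}^{n_w}$ iff $w(\alpha) \in \Phi^+ \cup \{-\sigma(\alpha)\}$ while $U_{-\alpha} \le P_{\sigma(\alpha)}^{n_w}$ iff $w(\alpha) \in \Phi^- \cup \{\sigma(\alpha)\}$, and these two sets again cover $\Phi$. But that step needs to be written out; as it stands, the mixed-automorphism case — precisely the difficulty the paper's choice of parabolic was engineered to avoid — is not handled.
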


\begin{proof}
Since $A_2(2) = \PSL_3(2) \cong \PSL_2(7)$ is addressed by Proposition~\ref{prop:chevalleylarge}, and since the remaining groups $X_\ell(2)$ with $\ell \le 2$ are not simple, we shall assume that $\ell \ge 3$. Let $\Phi \subseteq \mathbb{R}^\ell$ be a root system for $T$, with $\Phi^+ \subseteq \Phi$ a system of positive roots and $\Pi \subseteq \Phi^+$ a system of simple roots (see, for example, \cite[Ch.~2]{carter}). Then $\Phi$ is the disjoint union of $\Phi^+$ and $-\Phi^+$, and each node in the Dynkin diagram $D$ corresponding to $T$ is associated with a unique root in $\Pi$ (and vice versa). Hence each diagram automorphism of $D$ induces a permutation of $\Pi$. Let $r$ and $s$ be the simple roots associated with a leaf in $D$ and the adjacent node, respectively. Additionally, let $S$ be the closure of the set $\{s\}$ under the group of diagram automorphisms of $D$, so that $|S| \in \{1,2\}$.

Next, let $U$, $N$ and $W$ be as in the proof of Proposition~\ref{prop:chevalleylarge}. In this case, the group $H$ from that proof is trivial, and so $N_T(U) = U$ and $W \cong N$. By \cite[Proposition 2.1.8, p.~68 \& p.~93]{carter}, $T$ is generated by a set $\{x_u \mid u \in \Phi\}$ of involutions ($x_u$ is written as $x_u(1)$ in \cite{carter}), with $U = \langle x_u \mid u \in \Phi^+ \rangle$, and $N$ is generated by a set $\{n_v \mid v \in \Pi\}$ of involutions. Let $A := \langle U, n_v \mid v \in \Pi \setminus S\rangle$ be the parabolic subgroup of $T$ corresponding to the subset $\Pi \setminus S$ of $\Pi$. Then by \cite[Ch.~8.5]{carter} and the well-known structure of parabolic subgroups of Chevalley groups, $A$ has shape $[2^m]:(L_{\{r\}} \times L_{\Pi \setminus (\{r\} \cup S)})$ for some positive integer $m$, where $L_{\{r\}} \cong A_1(2) \cong S_3$ and $L_{\Pi \setminus (\{r\} \cup S)}$ are Levi subgroups. Since $q = 2$, all automorphisms of $T$ are products of inner and graph automorphisms. Moreover, $\Pi \setminus S$ is fixed setwise by all diagram automorphisms of $D$, which correspond to the graph automorphisms of $T$, and so all $\Aut(T)$-conjugates of $A$ are conjugate in $T$. In addition, $A$ contains an index two subgroup $B$ of shape $[2^m]:(F \times L_{\Pi \setminus (\{r\} \cup S)})$, where $F := L_{\{r\}}' \cong C_3$. We will show that $AtA = AtB$ for all $t \in T$, and the result will follow from Lemma~\ref{lem:cosetaction}.

Since $N_T(U) = U \le A$, we deduce as in the proof of Proposition~\ref{prop:chevalleylarge} that, for each $t \in T$, there exists $n \in N$ such that $AtA = AnA$. Thus it suffices to prove that $AnA = AnB$ for all $n \in N$ (it will immediately follow that $AnB = AtB$ for the corresponding $t \in T$). We also observe using \cite[Ch.~8.5]{carter} (and the fact that $F$ has index two in $L_{\{r\}}$) that $x_r \in A \setminus B$ and $x_{-r} x_r \in B$. The former containment implies that $AnA = AnB \cup Anx_rB$. To complete the proof, we will show that $Anx_rB = AnB$.

The Weyl group $W$ acts linearly on $\mathbb{R}^\ell$ and fixes $\Phi$. Let $w$ be the element of $W$ corresponding to $n$. By \cite[Lemma 7.2.1(i)]{carter}, $nx_r = x_{w(r)} n$ and $x_{-w(r)} n = x_{w(-r)} n = nx_{-r}$. If $w(r) \in \Phi^+$, then $x_{w(r)} \in \langle x_u \mid u \in \Phi^+ \rangle = U \le A$, and so $Anx_r = An$. Otherwise, $-w(r) \in \Phi^+$ and $x_{-w(r)} \in A$, yielding $An = An x_{-r}$ and hence $Anx_r = An x_{-r} x_r$, which lies in $AnB$ by the previous paragraph. In either case, we see that $Anx_rB = AnB$, as required.
\end{proof}

Next, we consider the alternating groups, followed by the sporadic groups and the Tits group.

\begin{proposition}
\label{prop:altgps}
Let $T$ be a finite simple alternating group $A_n$. Then Property \upshape{(i)} of Theorem~\ref{thm:intersecting} holds for $T$.
\end{proposition}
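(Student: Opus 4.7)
The plan is to apply Lemma~\ref{lem:cosetaction} with $R = T = A_n$, by exhibiting a proper subgroup $A$ of $T$ and a proper normal subgroup $B$ of $A$ such that $B$ acts transitively on each $A$-orbit on $T/A$, and all $\Aut(T)$-conjugates of $A$ lie in a single $T$-conjugacy class. The natural candidates are the intransitive subgroups of the form $(S_k \times S_{n-k}) \cap A_n$.

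For $n \geq 6$, I would set $A := (S_3 \times S_{n-3}) \cap A_n$, the stabiliser of $\{1,2,3\}$, so that $T/A$ is identified with the set of $3$-subsets of $\{1,\ldots,n\}$, and $B := A_3 \times A_{n-3}$, a proper index-$2$ normal subgroup of $A$. The $A$-orbits on $T/A$ are parametrised by $j := |S \cap \{1,2,3\}| \in \{0,1,2,3\}$, and the key input is the elementary fact that $A_m$ is transitive on the set of $j$-subsets of $\{1,\ldots,m\}$ for every $0 \leq j \leq m$ whenever $m \geq 3$ (an immediate consequence of the $(m-2)$-transitivity of $A_m$). Applying this to both $A_3$ and $A_{n-3}$, and factoring any $3$-subset according to its intersections with $\{1,2,3\}$ and $\{4,\ldots,n\}$, yields the required $B$-transitivity.

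For the $\Aut$-conjugacy condition, the case $n \neq 6$ is immediate, as $\Aut(A_n) = S_n$ and $A_n$ is transitive on $3$-subsets. For $n = 6$, the outer automorphism of $A_6$ not lying in $S_6$ could a priori send $A$ to a different $A_6$-class, so I would verify directly (using the \textsc{Atlas} list of maximal subgroups) that the only maximal subgroups of $A_6$ whose order is divisible by $|A| = 18$ are the Sylow $3$-normalisers of order $36$, which form a single $A_6$-class. Since such a maximal subgroup is isomorphic to $(C_3 \times C_3) \rtimes C_4$ with $C_4$ acting faithfully and irreducibly on $C_3 \times C_3$, its abelianisation is $C_4$ and so it admits a unique index-$2$ subgroup; consequently all subgroups of $A_6$ of order $18$ form a single $A_6$-class, giving the required conjugacy property.

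The remaining case $n = 5$ requires a separate argument, since $A_{n-3} = A_2$ is trivial. Here I would take $A := A_4$ (the point stabiliser in the natural action of $A_5$) and $B := V_4$, the Klein four normal subgroup of $A_4$; then $V_4$ acts regularly on $\{1,2,3,4\}$ and so transitively on the unique non-singleton $A$-orbit on $\{1,\ldots,5\}$, and all $S_5 = \Aut(A_5)$-conjugates of $A_4$ are $A_5$-conjugate by the transitivity of $A_5$ on $5$ points. The main potential obstacle I foresee is the $\Aut$-conjugacy check for $n = 6$; once that is in hand, all other verifications reduce to the straightforward transitivity computations described above.
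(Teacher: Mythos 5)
Your proof is correct, and for $n \geq 7$ it is essentially identical to the paper's: same subgroup $A=(S_3\times S_{n-3})\cap A_n$, same $B=A_3\times A_{n-3}$, same decomposition of the $3$-subset action into $A$-orbits by intersection size, same appeal to Lemma~\ref{lem:cosetaction}. The one genuine divergence is the treatment of the small cases. The paper simply notes that $A_5\cong\PSL_2(4)$ and $A_6\cong\PSL_2(9)$ and defers these to Proposition~\ref{prop:chevalleylarge} (the Chevalley case with $q\geq 3$), thereby starting the alternating argument at $n\geq 7$ and never having to think about the exceptional outer automorphism of $A_6$. You instead push the $3$-subset construction down to $n=6$ and directly verify the $\Aut$-conjugacy requirement there --- your argument (any order-$18$ subgroup lies in a Sylow $3$-normaliser $3^2{:}4$, which has abelianisation $C_4$ and hence a unique index-$2$ subgroup, so all order-$18$ subgroups are $A_6$-conjugate) is correct --- and you supply a separate, also correct, construction for $n=5$ with $A=A_4$ and $B=V_4$. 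Both routes work; the paper's shortcut is cleaner and avoids the $\Aut(A_6)$ subtlety entirely, while yours is more self-contained within the alternating-group framework and does not rely on the exceptional isomorphisms. (One tiny gloss: transitivity of $A_m$ on $j$-subsets for $j\in\{m-1,m\}$ is not a direct consequence of $(m-2)$-transitivity but follows by passing to complements; this is immediate but worth a word.)
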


\begin{proof}
Since $A_5 \cong \PSL_2(4) = A_1(4)$ and $A_6 \cong \PSL_2(9) = A_1(9)$ are addressed by Proposition~\ref{prop:chevalleylarge}, we shall assume that $n \ge 7$. Let $\Sigma$ be the set of $3$-subsets of a set of size $n$, and let $\alpha\in\Sigma$. Then $A:=T_\alpha=(S_3\times S_{n-3})\cap T$, and all $\Aut(T)$-conjugates of $A$ are conjugate in $T$. Additionally, $A$ has four orbits on $\Sigma$, namely $\{\alpha\}$ and $\{\beta \in \Sigma \mid |\beta\cap \alpha|=i\}$ for $i \in \{0,1,2\}$. Since the index two subgroup $B:=A_3 \times A_{n-3}$ of $A$ acts transitively on each of these orbits, and since the action of $T$ on $\Sigma$ is equivalent to its action on the right cosets of $A$, the result follows from Lemma~\ref{lem:cosetaction}.
\end{proof}

\begin{proposition}
\label{prop:sporadicgps}
Let $T$ be the Tits group ${}^2F_{4}(2)'$, or a sporadic simple group that does not lie in $\{\mathrm{J}_1, \mathrm{M}_{22}, \mathrm{J}_3, \mathrm{McL}, \mathrm{Th}, \mathbb{M}\}$. Then Property \upshape{(i)} of Theorem~\ref{thm:intersecting} holds for $T$.
\end{proposition}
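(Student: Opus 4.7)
The plan is to handle each listed sporadic group and the Tits group individually, exhibiting in each case a suitable pair $(A, B)$ where $A$ is a proper subgroup of $T$ and $B$ is a proper normal subgroup of $A$. By Lemma~\ref{lem:cosetaction}, it suffices to show that $B$ acts transitively on each orbit of $A$ on the right cosets of $A$ in $T$, provided that all $\Aut(T)$-conjugates of $A$ are conjugate in $T$; when this fails (e.g.\ for some groups $T$ admitting a non-trivial outer automorphism that fuses $T$-classes of maximal subgroups), we instead work with the coset action of $\Aut(T)$ on cosets of $A$.

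For each group $T$ in the list, I would consult the \textsc{Atlas} \cite{ATLAS} to pick a natural maximal subgroup $A$ of small index or with convenient structure (for example, a $p$-local subgroup, the stabiliser of a point in a known small-degree primitive action, or the normaliser of a cyclic subgroup of prime order) and an obvious proper normal subgroup $B$ of $A$---typically the derived subgroup $A'$, the kernel of a natural surjection $A \twoheadrightarrow C_2$, or a Sylow-related characteristic subgroup of $A$ with a complementary Levi-like section. The verification that $B$ is transitive on each $A$-orbit on cosets is then a finite check: enumerate a set of representatives for the $(A,A)$-double cosets of $T$ (or of $\Aut(T)$), and for each representative $t$ confirm that $A=B(A\cap A^t)$, equivalently that $[A:B]=[A\cap A^t : B\cap A\cap A^t]$. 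For most groups on the list this is immediate from a faithful permutation representation of modest degree available in the \textsc{Atlas} of Finite Group Representations, implemented in \textsc{Gap}.

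The main obstacle is the treatment of the very largest groups in the list---most notably the Baby Monster $\mathbb{B}$, the Fischer group $\mathrm{Fi}_{24}'$, and the Lyons group $\mathrm{Ly}$---for which storing an arbitrary faithful permutation representation and enumerating double cosets is not practical. For these, the plan is to choose $A$ so that the $(A,A)$-double coset decomposition (equivalently, the rank and subdegrees of the coset action) is already known from the literature, and so that the ranks of the two-point stabilisers $A\cap A^t$ can be extracted from the decomposition of the permutation character $1_A^T$ via the \textsc{Atlas} character table. Supplementation by $B$ can then be confirmed by restricting to $A$ and comparing with the permutation character of $A$ on the cosets of $A\cap A^t$, or by direct computation in a low-dimensional matrix representation of $A$ together with an explicit generator of a representative double coset.

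Thus the proof reduces to a finite table of choices $(A,B)$, one per group, with a routine computational verification in each case; the substantive difficulty lies only in arranging that the chosen $A$ for each of the largest sporadic groups has its coset action accessible via character-theoretic or small-representation data rather than a naive enumeration of group elements.
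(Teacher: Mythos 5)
Your proposal follows essentially the same route as the paper: pick, for each group, a maximal subgroup $A$ with a proper normal subgroup $B$, reduce to Lemma~\ref{lem:cosetaction} (switching to $R=\Aut(T)$ when $\Aut(T)$-conjugates of $A$ are not all $T$-conjugate, which the paper in fact does for $\mathrm{O'N}$), and verify the supplementation condition by a computer calculation. The substantive difference is in how that verification is carried out. You propose enumerating $(A,A)$-double coset representatives $t$ and checking $A=B(A\cap A^t)$ one at a time, falling back to permutation-character and rank data for the groups where explicit enumeration is infeasible. The paper instead sidesteps double cosets entirely: it works with the permutation character $\chi=1_A^R$ and uses the Cauchy--Frobenius Lemma to compute the \emph{number} of orbits of $A$ on the cosets, $c_A=\tfrac{1}{|A|}\sum_{a\in A}\chi(a)=[\chi,\chi]$, and of $B$, $c_B=\tfrac{1}{|B|}\sum_{b\in B}\chi(b)$. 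Since every $B$-orbit lies inside an $A$-orbit, $c_A=c_B$ already forces $B$ to be transitive on each $A$-orbit, which is exactly the hypothesis of Lemma~\ref{lem:cosetaction}. This aggregate counting needs only the character table of $R$ and the fusion of $A$- and $B$-classes into $R$-classes (all available in the GAP Character Table Library), so it scales uniformly to every group in the list, including $\mathbb{B}$ and $\mathrm{Fi}_{24}'$, with no double-coset enumeration, no faithful permutation representation, and no case distinction for ``large'' versus ``small'' sporadics. Your outline is not wrong, but it leaves the hard computational step vaguer than necessary and would require substantially more work to carry out for the largest groups; adopting the Cauchy--Frobenius orbit count would close that gap cleanly.
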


\begin{proof}
We observe from the \textsc{Atlas} \cite{ATLAS} that $T$ has a maximal subgroup $A$, as specified in Table~\ref{table:sporadics}, such that either $T = \mathrm{O'N}$ or all $\Aut(T)$-conjugates of $T_\alpha$ are conjugate in $T$. It is also clear that the group $B$ in the table is a proper normal subgroup of $A$. Note that if $T = \mathrm{M}_{12}$, then $A$ has two normal subgroups isomorphic to $S_5$; in what follows, $B$ may be chosen as either.

Now, let $R:=\Aut(T)$ if $T = \mathrm{O'N}$, and $R:=T$ otherwise. Additionally, let $\chi$ be the permutation character corresponding to the action of $R$ on the set $\Sigma$ of right cosets of $A$ in $R$. Then for each $t \in R$, the number of points in $\Sigma$ fixed by $t$ is equal to $\chi(t)$. By the Cauchy--Frobenius Lemma, $A$ and $B$ have $c_A:=\frac{1}{|A|}\sum_{a \in A} \chi(a)$ and $c_B:=\frac{1}{|B|}\sum_{b \in B} \chi(b)$ orbits on $\Sigma$, respectively. It is straightforward to compute $c_A$ and $c_B$ in GAP \cite{GAP} using the Character Table Library \cite{GAPchar} ($c_A$ is most readily calculated as the inner product $[\chi,\chi]$), and in each case we obtain $c_A = c_B$. Thus Lemma~\ref{lem:cosetaction} yields the result.
\end{proof}

\begin{table}
\centering
\renewcommand{\arraystretch}{1.2}
\caption{A maximal subgroup $A$ of a group $T$ from Proposition~\ref{prop:sporadicgps}, with $B$ a proper normal subgroup of $A$.}
\label{table:sporadics}
\begin{tabular}{ cccc }
\hline
$T$ & $A$ & $B$ & $|T:A|$ \\
\hline
$\mathrm{M}_{11}$ & $A_6.2$ & $A_6$ & $11$ \\
$\mathrm{M}_{12}$ & $S_5 \times 2$  & $S_5$ & $396$ \\
$\mathrm{J}_2$ & $A_5 \times D_{10}$ & $A_5 \times 5$ & $1008$ \\
$\mathrm{M}_{23}$ & $(2^4 \sd (3 \times A_5)) \sd 2$  & $2^4 \sd (3 \times A_5)$ & $1771$ \\
${}^2F_{4}(2)'$ & $(2^2.[2^8]) \sd S_3$ & $(2^2.[2^8]) \sd 3$ & $2925$ \\
$\mathrm{HS}$ & $S_8$ & $A_8$ & $1100$ \\
$\mathrm{M}_{24}$ & $\mathrm{M}_{12} \sd 2$ & $\mathrm{M}_{12}$ & $1288$ \\
$\mathrm{He}$ & $\PSp_4(4) \sd 2$ & $\PSp_4(4)$ & $2058$ \\
$\mathrm{Ru}$ & $(2^6 \sd \mathrm{PSU}_3(3)) \sd 2$ & $2^6 \sd \mathrm{PSU}_3(3)$ & $188500$ \\
$\mathrm{Suz}$ & $(3.\mathrm{PSU}_4(3)) \sd 2$ & $3.\mathrm{PSU}_4(3)$ & $22880$ \\
$\mathrm{O'N}$ & $\PSL_3(7) \sd 2$ & $\PSL_3(7)$ & $122760$ \\
$\mathrm{Co}_3$ & $\mathrm{McL} \sd 2$ & $\mathrm{McL}$ & $276$ \\
$\mathrm{Co}_2$ & $\mathrm{PSU}_6(2) \sd 2$ & $\mathrm{PSU}_6(2)$ & $2300$ \\
$\mathrm{Fi}_{22}$ & $\POmega_8^+(2) \sd S_3$ & $\mathrm{P}\Omega_8^+(2) \sd 3$ & $61776$ \\
$\mathrm{HN}$ & $\mathrm{PSU}_3(8) \sd 3$ & $\mathrm{PSU}_3(8)$ & $16500000$ \\
$\mathrm{Ly}$ & $(3.\mathrm{McL})\sd 2$ & $3.\mathrm{McL}$ & $9606125$\\
$\mathrm{Fi}_{23}$ & $\POmega_8^+(3) \sd S_3$ & $\POmega_8^+(3) \sd 3$ & $137632$ \\
$\mathrm{Co}_1$ & $(3.\mathrm{Suz}) \sd 2$ & $3.\mathrm{Suz}$ & $1545600$ \\
$\mathrm{J_4}$ & $(2_+^{1+12}.3\mathrm{M}_{22}) \sd 2$ & $2_+^{1+12}.3\mathrm{M}_{22}$ & $3980549947$\\
$\mathrm{Fi}_{24}'$ & $(3 \times \POmega^+_8(3) \sd 3) \sd 2$ & $3 \times \POmega^+_8(3) \sd 3$ & $14081405184$\\
$\mathbb{B}$ & $(2.{}^2E_6(2)) \sd 2$ & $2.{}^2E_6(2)$ & $13571955000$\\
\hline
\end{tabular}
\end{table}

We are now able to prove Theorem~\ref{thm:intersecting}. Given a group $G$ and a non-trivial (core-free) subgroup $H$ of $G$, we will write $b(G,H)$ to denote the base size of $G$ in its action on the set $\Sigma$ of right cosets of $H$, i.e., the minimum size of a subset $\Delta$ of $\Sigma$ such that the pointwise stabiliser $G_{(\Delta)}$ is trivial. Notice that $b(G,H) \ge 2$, as $H$ is a point stabiliser in this action.

\begin{proof}[Proof of Theorem~\ref{thm:intersecting}]
Propositions~\ref{prop:chevalleylarge}--\ref{prop:sporadicgps} show that (iii) implies (i), which clearly implies (ii). To complete the proof, we will show that if (iii) does not hold, then neither does (ii). If $T \in \{\mathrm{J}_1, \mathrm{M}_{22}, \mathrm{J}_3, \mathrm{McL}\}$, then we construct $T$ in Magma \cite{magma} via the \texttt{AutomorphismGroupSimpleGroup} and \texttt{Socle} functions, and show that (ii) does not hold via fast, direct computations.

Suppose next that $T = \mathrm{Th}$, let $C$ be a proper subgroup of $T$, and let $A$ be a subgroup of $C$. Observe that if $b(T,C) = 2$, then (as discussed below the statement of Theorem~\ref{thm:intersecting}) the two-point stabiliser $C \cap C^t$ is trivial for some $t$. Hence $A \cap A^t = 1$, and so $A$ does not satisfy (ii). In general, $b(T,A) \le b(T,C)$. By \cite[Theorem 1]{burnesssporadic}, $T$ has only two maximal subgroups (up to conjugacy) with corresponding base size greater than two, namely $M_1:={}^3D_4(2) \sd 3$ and $M_2:=2^5.\PSL_5(2)$, and $b(T,M_1) = b(T,M_2) = 3$. Hence any proper subgroup $A$ of $T$ satisfying (ii) is a non-simple subgroup of $M_1$ or $M_2$ with $b(T,A) = 3$.

Now, let $\Sigma$ be the set of right cosets in $T$ of a proper subgroup $A$, and let $c$ be the number of orbits of $A$ on $\Sigma$. If $b(T,A) \ge 3$, then $|A \cap A^t| \ge 2$ for all $t \in T$, and it follows from the Orbit-Stabiliser Theorem that $c|A|/2 \ge |\Sigma|$. If $A$ is a maximal subgroup of either $M_1$ or $M_1' \cong {}^3D_4(2)$, then the character table of $A$ is included in The GAP Character Table Library. Thus for every such $A \ne M_1'$, we can compute $c$ as in the proof of Proposition~\ref{prop:sporadicgps}, and we see that in fact $c|A|/2 < |\Sigma|$, and hence $b(T,A) = 2$. To show that $b(T,A) = 2$ for each maximal subgroup $A$ of $M_2$, we construct $T$ and $M_2$ in Magma as subgroups of $\GL_{248}(2)$ using the respective generating pairs $\{x,y\}$ and $\{w_1,w_2\}$ given in \cite{onlineatlas}. Magma calculations (with a runtime of less than 10 minutes and a memory usage of 1.5 GB) then show that $A \cap A^y = 1$ for all representatives $A$ of the conjugacy classes of maximal subgroups of $M_2$ returned by the \texttt{MaximalSubgroups} function. Therefore, $M_1$ and $M_2$ are the only non-simple subgroups of $T$ with corresponding base size at least three. Further character table computations in GAP show that if $A \in \{M_1,M_2\}$, then for each proper normal subgroup $B$ of $A$, the number of $B$-orbits on $\Sigma$ is greater than the number of $A$-orbits. An additional application of the Orbit-Stabiliser Theorem yields $B(A \cap A^t) < A$ for some $t \in T$. Therefore, (ii) does not hold.

Finally, suppose that $T = \mathbb{M}$. As above, any proper subgroup $A$ of $T$ satisfying (ii) also satisfies $b(T,A) \ge 3$. By \cite[Theorem 3.1]{burnesssoluble}, the unique (up to conjugacy) proper subgroup $A$ of $T$ with $b(T,A) \ge 3$ is the maximal subgroup $K:=2.\mathbb{B}$, with $b(T,K) = 3$. Since $K$ is quasisimple, its centre $Z$ of order two is its unique nontrivial proper normal subgroup, and $Z$ lies in each maximal subgroup of $K$. Hence $Z(K \cap K^t) < K$ for all $t \in T \setminus K$, and so (ii) does not hold.
\end{proof}

We now prove our main theorem.

\begin{proof}[Proof of Theorem~\ref{thm:spreaddiag}]
As each spreading primitive group is synchronising, Theorem~\ref{thm:synchdiag} implies that a spreading primitive group of diagonal type has socle $T \times T$, for some non-abelian finite simple group $T$. For each $T$, let $\Omega := T$, and let $W(T)$ be the subgroup of $\Sym(\Omega)$ defined in \S\ref{sec:prel}. Recall also that each subgroup of a non-spreading group is non-spreading. Thus it suffices to show that $W(T)$ is non-spreading for each $T$. If $T \notin \{\mathrm{J}_1,\mathrm{M}_{22}, \mathrm{J}_3, \mathrm{McL},\mathrm{Th},\mathbb{M}\}$, then this is an immediate consequence of Theorem~\ref{thm:intersecting} and Corollary~\ref{cor:diagonalaction}.

For each of the six remaining groups $T$, let $r$, $s_1$ and $s_2$ be members of the conjugacy classes of $T$ given in Table~\ref{table:conjclasses}. By inspecting the character table for $T$ in \cite{ATLAS}, we observe that these elements satisfy Properties (i) and (ii) of Lemma~\ref{lem:chars}. Therefore, that lemma shows that $W(T)$ is non-spreading.
\end{proof}

\begin{table}
\centering
\renewcommand{\arraystretch}{1.2}
\caption{Elements $r$, $s_1$ and $s_2$ that satisfy Properties (i) and (ii) of Lemma~\ref{lem:chars}, for each of six sporadic groups $T$. Each element is specified by its corresponding conjugacy class, labelled as in the \textsc{Atlas} \cite{ATLAS}.}
\label{table:conjclasses}
\begin{tabular}{ cccc }
\hline
$T$ & $r$ & $s_1$ & $s_2$ \\
\hline
$\mathrm{J}_1$ & 7A & 5A & 5B \\
$\mathrm{M}_{22}$ & 5A & 7A & 7B \\
$\mathrm{J}_3$ & 5A & 19A & 19B \\
$\mathrm{McL}$ & 4A & 9A & 9B \\
$\mathrm{Th}$ & 7A & 39A & 39B \\
$\mathbb{M}$ & 110A & 119A & 119B \\
\hline
\end{tabular}
\end{table}

\subsection*{Acknowledgements} This work forms part of an Australian Research Council Discovery Project
DP200101951. We thank Tim Burness for drawing our attention to the result \cite[Theorem 3.1]{burnesssoluble}, which helped us to resolve the $\mathbb{M}$ case of Theorem~\ref{thm:intersecting}. We also thank the referee for some helpful suggestions.

\bibliographystyle{plain}
\bibliography{Synch}

\end{document}